\newcounter{figurecounter}
\newtheorem{theorem}{Theorem}[section]
\newtheorem{definition}[theorem]{Definition}
\newtheorem{example}[theorem]{Example}
\newtheorem{lemma}[theorem]{Lemma}
\newtheorem{proposition}[theorem]{Proposition}
\newenvironment{proof}[1][Proof]{\textbf{#1.} }{\ \rule{0.5em}{0.5em}}
\newcommand{\graph}{{\rm graph}}
\newcommand{\vertices}{{\rm vert}}
\newcommand{\modulo}{{\rm mod\ }}
\newcommand{\phull}{{\textrm{b-hull}}}
\newcommand{\dN}{{\mathbb N}}
\newcommand{\dR}{{\mathbb R}}
\newcommand{\dZ}{{\mathbb Z}}
\newcommand{\calG}{{\cal G}}
\newcommand{\calN}{{\cal N}}
\newcommand{\ep}{\varepsilon}
\begin{document}

\title{Monovex Sets%
\thanks{The first author also uses the spelling ``Buhovski'' for his family name. The work of L.~Buhovsky was partially supported by the ISF grant 1380/13, by the Alon Fellowship, and by the Raymond and Beverly Sackler Career Development Chair. The work of E.~Solan was partially supported by ISF grant 323/13.}}
\author{Lev Buhovsky%
\thanks{School of Mathematical Sciences, Tel Aviv University,
Tel Aviv 6997800, Israel.
levbuh@post.tau.ac.il.},
Eilon Solan%
\thanks{School of Mathematical Sciences, Tel Aviv University,
Tel Aviv 6997800, Israel. eilons@post.tau.ac.il.}, and Omri N. Solan%
\thanks{School of Mathematical Sciences, Tel Aviv University,
Tel Aviv 6997800, Israel. omrisola@post.tau.ac.il.}}

\maketitle

\begin{abstract}
A set $A$ in a finite dimensional Euclidean space is \emph{monovex}
if for every two points $x,y \in A$ there is a continuous path within the set that connects $x$ and $y$ and is monotone (nonincreasing or nondecreasing) in each coordinate.
We prove that every open monovex set as well as every closed monovex set is contractible,
and provide an example of a nonopen and nonclosed monovex set that is not contractible.
Our proofs reveal additional properties of monovex sets.
\end{abstract}

\noindent\textbf{Keywords:} Monovex sets, contractible sets.

\section{Introduction}

A set $A$ in a finite dimensional Euclidean space is \emph{monovex}
if for every two points $x,y \in A$ there is a continuous path within the set that connects $x$ and $y$ and is monotone
(nonincreasing or nondecreasing) in each coordinate.
In particular, whether or not a set is monovex depends on the choice of basis for the space.

Monovex sets arise in the study of stochastic games (Solan, 2016),
where an extension of the Kakutani's fixed-point theorem (Kakutani, 1941) for set-valued functions
with a closed graph and nonempty monovex values is needed.%
\footnote{Kakutani's fixed-point theorem states that any set-valued function $F$
from a convex and compact subset of $\dR^n$ to itself with a closed graph and nonempty convex values has a fixed point.}
By Eilenberg-Montgomery fixed-point theorem (Eilenberg and Montgomery, 1946) any set-valued functions from a convex compact subset of $\dR^n$
to itself with a closed graph and nonempty contractible values has a fixed point.
Consequently, our goal is to study contractibility of monovex sets.

In this paper we prove that every open monovex set, as well as every closed monovex set, is contractible.
We also provide an example of a nonopen and nonclosed monovex set that is not contractible.

\section{Definition and Main Results}

The concept that this paper studies is monovex subsets of a finite dimensional Euclidean space.

\begin{definition}
A set $A \subseteq \dR^n$ is \emph{monovex} if for every $x,y \in
A$ there is a continuous path $\gamma : [0,1] \to A$ that
satisfies the following properties:
\begin{enumerate}
\item[(M1)]   $\gamma(0) = x$ and $\gamma(1) = y$.
\item[(M2)]   $\gamma_i : [0,1] \to \dR$ is a monotone function (nondecreasing or nonincreasing) for every $i\in \{1,2,\ldots,n\}$.
\end{enumerate}
A path $\gamma$ that satisfies Condition (M2) is called \emph{monotone}.
\end{definition}

The image of a monovex set under a diagonal affine transformation is monovex, yet
a rotation of a monovex set need not be monovex.
Every convex set is in particular monovex.
If $A$ is a monovex set, then so is the projection of $A$ onto any ``coordinate subspace'', that is,
a subspace spanned by a collection of elements of the standard basis of $\dR^n$.
Every monovex subset of $\dR$ is convex,
yet there are monovex subsets of $\dR^2$ that are not convex (see Figure \arabic{figurecounter}).

\bigskip
\centerline{\includegraphics{figure.1} \ \ \ \ \ \ \ \ \ \ \ \ \ \ \ \ \ \ \ \ \ \ \ \ \ \includegraphics{figure.2}}
\centerline{Figure \arabic{figurecounter}: A monovex set (Part A) and a nonmonovex set (Part B) in the plane.}
\addtocounter{figurecounter}{1}
\bigskip

As the following example shows, monovex sets may be complex objects.
In particular, they need not be CW-complexes.

\begin{example}
\label{example:1}
Let $A \subset [0,1]^2$ be the following set (see Figure \arabic{figurecounter}):
\[ A = \{(0,0)\} \cup \left( \cup_{k=0}^\infty [\tfrac{1}{2^{k+1}},\tfrac{1}{2^k}]^2\right). \]
It is evident that the set $A$ is monovex, yet it is not a CW-complex.

\bigskip
\centerline{\includegraphics{figure.3}}
\centerline{Figure \arabic{figurecounter}: The monovex set $A$ in Example \ref{example:1}.}
\addtocounter{figurecounter}{1}
\end{example}

The Minkowski sum of two convex sets is a convex set.
This property is not shared by monovex sets.
In fact, as the following example shows,
the Minkowski sum of a monovex set and a convex set need not be a monovex set.
In Lemma \ref{lemma:0} below we will prove that the Minkowski sum of a monovex set in $\dR^n$ and an $n$-dimensional box whose faces are parallel to the axes is a monovex set.

\begin{example}
\label{example:3}
Let $A$ be the union of the two line segments $[(0,0,0),(0,1,1)]$ and $[(0,1,1),(1,1,2)]$, which is monovex.
Let $B$ be the line segment $[(0,0,0),(-1,-1,2)]$.
The intersection of the set $A+B := \{ a+b \colon a \in A, b \in B\}$ and the line $\{x \in \dR^3 \colon x_1=x_2=0\}$
is the two points $(0,0,0)$ and $(0,0,4)$.
Indeed, all points $b \in B$ satisfy $b_1= b_2$,
while the only points $a \in A$ that satisfy $a_1=a_2$ are $(0,0,0)$ and $(1,1,2)$.
Hence a point $a+b \in A + B$ is on the line $\{x \in \dR^3 \colon x_1=x_2=0\}$ if and only if
$a=b=(0,0,0)$ or $a=-b=(1,1,2)$.

Since the intersection of $A+B$ and the line $\{x \in \dR^3 \colon x_1=x_2=0\}$ contains two points,
there is no monotone path that connects these points and lies in $A+B$, and therefore the set $A+B$ is not monovex.
\end{example}

The Minkowski sum of the sets in Example \ref{example:3} is contractible.
As the following example shows, the Minkowski sum of a monovex set and a convex set can be homotopy equivalent to the circle $S^1$.

\begin{example}
\label{example:4}
Let $A$ be the union of the three line segments $[(0,0,0),(1,0,0)]$, $[(1,0,0),(1,1,0)]$, and $[(1,1,0),(1,1,1)]$, which is monovex.
Let $B = \{(x,x,x) \colon x \in \dR\}$, which is convex.
Denote by $C$ the triangle in $\dR^3$ whose vertices are $(0,0,0)$, $(\tfrac{2}{3},-\tfrac{1}{3},-\tfrac{1}{3})$, and $(\tfrac{1}{3},\tfrac{1}{3},-\tfrac{2}{3})$.
The Minkowski sum of $A$ and $B$ is $A+B = C+B$, which is homotopy equivalent to the circle $S^1$.
\end{example}

As mentioned in the introduction, our goal is to study whether monovexity implies contractibility.
It is a little technical but not difficult to show that every monovex subset of $\dR^2$ is contractible.
As the following example shows, not every three-dimensional monovex set is contractible.

\begin{example}
\label{example:2}
Let $A \subset [-1,1]^3$ be the set of all points that have at least one negative coordinate and at least one nonnegative coordinate.
The reader can verify that the set $A$ is monovex.
The set $A$ is disjoint of the line $\{(x,x,x) \colon x \in \dR\}$,
and it contains the loop $\gamma$ that is depicted in Figure \arabic{figurecounter} and is not contractible in $\dR^3 \setminus \{(x,x,x) \colon x \in \dR\}$.
In particular, the set $A$ is not contractible.
In fact, one can show that the set $A$ is homotopy equivalent to the circle $S^1$.

\bigskip
\centerline{\includegraphics{figure.5}}
\centerline{Figure \arabic{figurecounter}: The path $\gamma$ in Example \ref{example:2} (the dark curve).}
\addtocounter{figurecounter}{1}
\end{example}

We next observe that every open monovex set is contractible.

\begin{theorem}
\label{theorem:open}
Every open monovex subset of $\dR^n$ is contractible.
\end{theorem}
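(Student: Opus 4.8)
The plan is to prove the theorem by induction on the dimension $n$, using the projection that forgets the last coordinate. Write points of $\dR^n$ as $(w,t)$ with $w\in\dR^{n-1}$ and $t\in\dR$, and set $\pi(w,t)=w$; we may assume $A$ is nonempty. For $n=1$ a monovex set is convex, so a nonempty open monovex subset of $\dR$ is an open interval and hence contractible. For the inductive step I would use that $\pi(A)$ is again open (projections of open sets are open) and monovex (the projection of a monovex set onto a coordinate subspace is monovex), so by the induction hypothesis $\pi(A)$ is contractible. It then suffices to show that $A$ is homotopy equivalent to $\pi(A)$; concretely, I would exhibit a strong deformation retraction of $A$ onto the image of a continuous section of $\pi$.

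The key step is the observation that every fiber $A_w:=\{t\in\dR:(w,t)\in A\}$ is convex, i.e. an interval. Indeed, if $(w,t_1),(w,t_2)\in A$, let $\gamma$ be a monotone path in $A$ joining them. For each $i<n$ the coordinate $\gamma_i$ is monotone with $\gamma_i(0)=\gamma_i(1)=w_i$, and a monotone function with equal endpoints is constant; hence $\gamma_i\equiv w_i$. Thus $\gamma$ stays in the line $\{w\}\times\dR$ and its last coordinate runs monotonically from $t_1$ to $t_2$, so by the intermediate value theorem the entire segment between $t_1$ and $t_2$ lies in $A_w$. This is precisely where monovexity is exploited, and it is what makes the induction possible.

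Since $A$ is open, each nonempty fiber is an open interval $A_w=(\alpha(w),\beta(w))$ with $\alpha(w)=\inf A_w\in[-\infty,\infty)$ and $\beta(w)=\sup A_w\in(-\infty,\infty]$. A short openness argument shows that $\alpha$ is upper semicontinuous and $\beta$ is lower semicontinuous on $\pi(A)$, with $\alpha<\beta$ everywhere: if $\alpha(w_0)<c$, choose $t$ with $\alpha(w_0)<t<\min(c,\beta(w_0))$, so $(w_0,t)\in A$ and, as $A$ is open, $(w,t)\in A$ for $w$ near $w_0$, forcing $\alpha(w)<c$ there, and symmetrically for $\beta$. I would then invoke a standard continuous insertion (selection) theorem on the metric space $\pi(A)$ to produce a continuous $\sigma_n:\pi(A)\to\dR$ with $\alpha<\sigma_n<\beta$, first composing $\alpha,\beta$ with an increasing homeomorphism $\dR\cong(-1,1)$ to reduce to the bounded case and absorb unbounded fibers. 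This $\sigma_n$ defines a continuous section $\sigma(w)=(w,\sigma_n(w))$ of $\pi$ into $A$.

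Finally, because each fiber is convex, the fiberwise straight-line homotopy $H\big((w,t),s\big)=\big(w,(1-s)t+s\,\sigma_n(w)\big)$ takes values in $A$, fixes $\sigma(\pi(A))$ pointwise, and is therefore a strong deformation retraction of $A$ onto $\sigma(\pi(A))$. Since $\sigma(\pi(A))$ is homeomorphic to $\pi(A)$, which is contractible by the induction hypothesis, $A$ is contractible as well. The step I expect to be the main obstacle is the construction of the continuous section: the fiber endpoints $\alpha,\beta$ are only semicontinuous, and it is exactly the openness of $A$ that supplies the semicontinuity in the direction required for the insertion theorem to apply. The analogous step behaves differently for closed sets, which is presumably why the closed case is handled separately.
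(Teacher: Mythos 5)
Your proposal is correct and follows essentially the same route as the paper: induction on $n$, projection onto the first $n-1$ coordinates, the observation that the fibers are open intervals, a continuous selection through the fibers, and a fiberwise straight-line contraction onto its graph. The only difference is that where the paper cites Michael's selection theorem, you reprove the relevant special case by hand via the semicontinuity of the fiber endpoints and a classical insertion theorem, and you also spell out the monotone-path argument showing the fibers are convex, which the paper leaves implicit.
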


\begin{proof}
The proof is by induction on $n$.
For $n=1$, an open monovex set is an open interval, hence contractible.
Assume now that $A$ is an open monovex subset of $\dR^n$ with $n > 1$.
Let $B$ be the projection of $A$ onto its first $n-1$ coordinates,
and let $F : B \rightrightarrows \dR$ be the set-valued function whose graph is $A$; that is
\[ F(x) := \{ t \in \dR \colon (x,t) \in A\}, \ \ \ \forall x \in B. \]
Note that $F(x)$ is an open interval for every $x \in B$.
The set $B$ is open and monovex, and by the induction hypothesis it is contractible.
The set-valued function $F$ satisfies the conditions of Michael's selection theorem (Michael, 1956, Theorem 3.1'\kern-.06em'\kern-.06em'),%
\footnote{Michael's selection theorem implies in particular that for every subset $X \subseteq \dR^n$
and every set-valued function $F : X \rightrightarrows \dR^m$
with an open graph and nonempty convex values there exists a continuous function $f : X \to \dR^n$ such that $f(x) \in F(x)$ for every $x \in X$.}
hence there is a continuous function $f : B \to \dR$ such that $f(x) \in F(x)$ for every $x \in B$.
This implies that $A$ is contractible;
indeed first contract $A$ to $\graph(f)$, and then contract $\graph(f)$ to a point.
\end{proof}

\bigskip

The main result of the paper is the following.

\begin{theorem}
\label{theorem:closed}
Every closed monovex subset of $\dR^n$ is contractible.
\end{theorem}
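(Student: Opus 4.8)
The plan is to argue by induction on the dimension $n$, mirroring the proof of Theorem~\ref{theorem:open} but replacing Michael's selection theorem, which is unavailable here, by a construction that exploits monovexity directly. For $n=1$ a closed monovex set is a closed interval, hence contractible. For the inductive step let $A\subseteq\dR^n$ be closed and monovex, let $B\subseteq\dR^{n-1}$ be its projection onto the first $n-1$ coordinates, and let $F(x)=\{t\in\dR\colon (x,t)\in A\}$ be the fiber over $x\in B$. The first observation is that each fiber is a closed interval: if $(x,t_1),(x,t_2)\in A$ then a monotone path joining them has its first $n-1$ coordinates monotone with equal endpoints, hence constant, so the path is the vertical segment from $(x,t_1)$ to $(x,t_2)$ and lies in $A$; closedness of $A$ then makes $F(x)=[f^-(x),f^+(x)]$ a closed interval, with $f^-$ lower semicontinuous and $f^+$ upper semicontinuous. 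The set $B$ is monovex, and once it is known to be contractible, the theorem follows provided we can produce a continuous selection $f\colon B\to\dR$ with $(x,f(x))\in A$: the straight-line homotopy $((x,t),s)\mapsto(x,(1-s)t+sf(x))$ inside each fiber deformation retracts $A$ onto $\graph(f)\cong B$, and contracting $B$ finishes the argument.

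Two difficulties must be addressed, and the second is the real obstacle. First, the projection $B$ of a closed set need not be closed (for instance the epigraph of $\max(1,1/x)$ over $x>0$ is closed and convex but projects to the open halfline), so the induction hypothesis does not apply to $B$ verbatim. I would remove this by reducing to the compact case: since the intersection of a monovex set with an axis-parallel box is again monovex, because a monotone path cannot leave such a box, each $A\cap[-N,N]^n$ is compact and monovex, its projection is compact, and one recovers $A$ as the increasing union $\bigcup_N (A\cap[-N,N]^n)$; it then remains to pass contractibility through this exhaustion.

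The main obstacle is the continuous selection itself. In the open case the fibers were open intervals and the graph was open, so Michael's selection theorem applied; here the graph is merely closed, the fiber map $F$ is upper- rather than lower-semicontinuous, and lower semicontinuity genuinely fails. Indeed, in Example~\ref{example:1} the upper boundary $f^+$ jumps upward at each point $x=2^{-k}$, so $F$ is not lower semicontinuous and Michael's theorem does not apply, yet a continuous selection (there, $f(x)=x$) still exists. The heart of the proof is therefore to construct such a selection by hand, using the monotone paths of $A$ to thread continuously between the fibers across the jumps of $f^-$ and $f^+$, rather than committing to either boundary function. An alternative route I would pursue is to approximate $A$ from outside by the open sets $A\oplus(-\epsilon,\epsilon)^n$, which are monovex by Lemma~\ref{lemma:0} and hence contractible by Theorem~\ref{theorem:open}, and to extract a contraction of $A$ as a limit of their contractions as $\epsilon\downarrow 0$.

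Finally, I expect that both the exhaustion step and the limiting (or selection) step will require a mild local-regularity input about $A$, since an exhaustion by contractible sets, or a nested intersection of contractible open neighborhoods, guarantees only that the homotopy groups of $A$ vanish, not outright contractibility. Securing such regularity from monovexity, and establishing continuity of the selection precisely at the semicontinuity jumps of $f^-$ and $f^+$, is where I expect the bulk of the technical work to lie.
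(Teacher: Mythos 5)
Your reductions are sound as far as they go: the fibers of a closed monovex set over its projection are closed intervals, the intersection with an axis-parallel box is again monovex because a monotone path stays in the b-hull of its endpoints, and you are right that Michael's theorem is unavailable because the fiber map fails to be lower semicontinuous. But the proposal stops exactly where the content of the theorem lies, and you say so yourself: neither of your two routes is carried out. The first route needs a continuous selection of the fiber map built ``by hand''; no construction is given, and since $f^-$ is lower and $f^+$ upper semicontinuous --- the wrong way around for any insertion theorem --- there is no soft argument that produces one. Even granting the selection, the induction does not close: the projection $B$ is monovex but in general neither open nor closed, and your repair via the exhaustion $A=\bigcup_N\bigl(A\cap[-N,N]^n\bigr)$ runs into the obstruction you yourself flag, namely that an increasing union (or nested intersection) of contractible sets only controls weak homotopy type; upgrading to contractibility would require an ANR-type local regularity that monovex sets are not known to have (Example \ref{example:1} shows they need not even be CW complexes). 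The second route, contracting the open monovex sets $A+(-\ep,\ep)^n$ and letting $\ep\downarrow 0$, has the same defect: the contractions supplied by Theorem \ref{theorem:open} need not map $A$ into $A$, need not converge, and no equicontinuity is available to extract a limit. So the two genuinely hard steps --- producing a selection-like object, and converting approximate data into an exact contraction --- are both left open.

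For comparison, the paper resolves exactly these two points, by two different mechanisms and without induction on dimension. Its second proof replaces your selection step by Lemma \ref{lemma:michael} (Michael's theorem extended to monovex values, but only for \emph{open} graphs) applied to an open-graph approximation (Lemma \ref{proposition:1}) of the nearest-point map $x\mapsto A\cap\overline B(x,d_\infty(x,A))$; the resulting continuous $g$ satisfies $d_\infty(g(x),A)\le d_\infty(x,A)/9$, so the limit problem is solved not by compactness but by iterating $g$, which converges locally uniformly to a retraction $\dR^n\to A$ (Proposition \ref{prop:4}), whence contractibility follows by composing with a straight-line contraction of $\dR^n$. Its first proof avoids selections altogether and instead builds a single continuous map $\varphi:A\times A\times[0,1]\to A$ with $\varphi(x,y,0)=x$ and $\varphi(x,y,1)=y$, via a dyadic-subdivision construction on lattices that uses the monotone paths only to define ``midpoints'' and yields H\"older estimates strong enough to pass to the limit. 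Either of these ideas is what your outline is missing.
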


We provide two proofs to Theorem \ref{theorem:closed},
each one uses different properties of monovex sets, which may have their own interest.
The first proof, provided in Section \ref{proof:2},
relies on the property that one can assign, in a continuous way, to every pair of points in a monovex set a (not necessarily monotone)
path that connects these points and lies in the set.
The second proof, provided in Section \ref{proof:1},
relies on the stronger property that the complement of a monovex set can be continuously projected onto the set.
In the first proof we will provide a direct argument that shows the existence of a continuous map from pairs of points in the monovex set to paths that connect the points and lie in the set.

Several open problems regarding contractibility of monovex sets still remain.
We prove that every closed monovex set is contractible.
We do not know whether for every such set there is a Lipschitz continuous contraction.
Another issue that remains open is whether our results hold for infinite dimensional spaces.

\section{Proof of Theorem \ref{theorem:closed}}

Throughout the paper we use the maximum metric in $\dR^n$, that is, $d_\infty(x,y) := \max_{1 \leq i \leq n}| x_i-y_i|$ for every $x,y \in \dR^n$.
The distance between a point $x \in \dR^n$ and a set $A \in \dR^n$ is $d_\infty(x,A) := \inf_{y \in A} d_\infty(x,y)$,
and the distance between two sets $A,B \subset \dR^n$ is the Hausdorff distance
$d_\infty(A,B) := \max\{\sup_{x \in A} d_\infty(x,B),\sup_{y \in B} d_\infty(y,A)\}$.

For every $x \in \dR^n$ and every $r > 0$ we denote by $B(x,r) := \{y \in \dR^n \colon d_\infty(x,y) < r\}$
the open ball around $x$ with radius $r$,
and by $\overline B(x,r) := \{y \in \dR^n \colon d_\infty(x,y) \leq r\}$
the closed ball around $x$ with radius $r$.
We denote by $\vec 0$ the vector $(0,0,\ldots,0)$ in $\dR^n$.

A (closed) \emph{box} in $\dR^n$ is a set of the form $\times_{i=1}^n [a_i,b_i]$, where $a_i \leq b_i$ for each $i \in \{1,2,\ldots,n\}$.
A box is \emph{$l$-dimensional} if the number of indices $i$ such that $a_i < b_i$ is $l$.
The set of vertices of a box $R$ is denoted $\vertices(R)$.
The smallest box that contains a set $A$ is called the \emph{b-hull} of $A$
and denoted $\phull(A)$.

A \emph{b-lattice} is a set of the form $\Gamma = \{(a_1k_1,\ldots,a_nk_n) \colon k_1,k_2,\ldots,k_n \in \dZ\}$,
where $a_1,a_2,\ldots,a_n > 0$.
Denote by $P_l(\Gamma)$ the set of $l$-dimensional elementary boxes having vertices in the lattice, that is,
the collection of all sets $\times_{i=1}^n J_i$ such that for each $i$ either $J_i = \{a_ik_i\}$ or $J_i=[a_ik_i,a_i(k_i+1)]$ for some $k_i \in \dZ$,
and moreover the second condition happens for exactly $l$ values of $i$.
Denote $P(\Gamma) := P_n(\Gamma)$ the set of full-dimensional elementary boxes with vertices in the lattice $\Gamma$.

\subsection{First Proof}
\label{proof:2}

The following lemma states that any function $f$ from an $m$-dimensional grid to a monovex set $A$ can be extended to a continuous function
from the $m$-dimensional space to $A$ with the property that the image under $f$ of any elementary $l$-dimensional box whose vertices are points in the grid
is a subset of the b-hull of the image under $f$ of the vertices of the box.

\begin{proposition}
\label{lemma:lev1}
Let $A \subset \dR^n$ be a closed monovex set and let $\Gamma \subset \dR^m$ be a b-lattice.
Let $X \subset \dR^m$ be a (finite or infinite) union of boxes in $P(\Gamma)$,
and let $S := X \cap \Gamma$ be the set of all vertices of these boxes.
Let $f : S \to A$.
Then $f$ can be extended to a continuous function $f : X \to A$ that satisfies the following property:
\begin{itemize}
\item[(P)]
For every $l$, $1 \leq l \leq m$,
and every box $R \in P_l(\Gamma)$ that is a subset of $X$, the image $f(R)$ is a subset of the b-hull of $f(\vertices(R))$.
\end{itemize}
\end{proposition}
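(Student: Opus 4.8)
The plan is to construct $f$ by induction on the dimension of the cells, extending it over the $l$-dimensional cells once it has been defined on the cells of dimension less than $l$, and to carry property~(P) along as an invariant. The base case $l=1$ is immediate: each edge $R\in P_1(\Gamma)$ with $R\subseteq X$ joins two vertices $v_0,v_1\in S$, and since $A$ is monovex I may join $f(v_0)$ and $f(v_1)$ by a monotone path inside $A$. Because every coordinate of a monotone path stays between its values at the endpoints, the image of the edge lies in $\phull(f(\vertices(R)))$, which is property~(P) for $l=1$; and since two edges share only vertices, where $f$ is already fixed, the resulting map on the $1$-skeleton is continuous.

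For the inductive step fix $l$ with $2\le l\le m$, and assume $f$ has been defined, is continuous, and satisfies property~(P) on all boxes of dimension smaller than $l$. Distinct $l$-boxes of $P_l(\Gamma)$ meet only along common faces of lower dimension, on which $f$ is already fixed, so it suffices to extend $f$ over the interior of each $R\in P_l(\Gamma)$ with $R\subseteq X$ separately. Write $Q:=\phull(f(\vertices(R)))$. Every facet $R'$ of $R$ satisfies $\vertices(R')\subseteq\vertices(R)$, so the inductive form of property~(P) yields $f(R')\subseteq\phull(f(\vertices(R')))\subseteq Q$; hence the already-defined boundary map $g:=f|_{\partial R}$ takes values in $A\cap Q$. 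The task is therefore to extend $g$ continuously over $R$ while keeping the image inside $A\cap Q$, since this last inclusion is exactly property~(P) for $R$. Two facts following directly from the definition of monovexity will drive the extension: a monotone path between two points of the box $Q$ never leaves $Q$, and consequently $A\cap Q$ is itself monovex.

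To fill $R$ I will cone from its minimal vertex $v_*$, the vertex all of whose active coordinates are smallest. Each $p\in R$ other than $v_*$ lies on a unique segment from $v_*$ to a point $u$ of the outer shell of $R$ (the union of the facets of $R$ not containing $v_*$), say $p=v_*+t(u-v_*)$ with $t\in(0,1]$, and I set $f(p)$ to be the point reached at time $t$ along a monotone path in $A$ from $f(v_*)$ to $g(u)$. Such a path stays in $A$ by monovexity and in $Q$ because both of its endpoints lie in the box $Q$, so the image lands in $A\cap Q$ and property~(P) is secured. Moreover the facets of $R$ through $v_*$ were themselves coned from $v_*$ at the previous stage, so if the paths are chosen compatibly the construction reproduces $g$ on all of $\partial R$ and glues continuously to it.

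The step I expect to be the main obstacle is precisely the continuity of this construction: coning requires choosing, continuously in the shell point $u$, a monotone path from the fixed point $f(v_*)$ to the moving endpoint $g(u)$, in a way that agrees with the monotone paths already selected on the lower-dimensional shells. This is exactly the continuous selection of monotone paths that the introduction flags as the delicate property underlying the first proof. I plan to obtain the selection by a secondary induction in which the lattice $\Gamma$ is refined: on a sufficiently fine subgrid consecutive values of $g$ are close and their b-hulls are small, so that the closedness of $A$ together with a compactness and uniform-continuity argument lets me assemble the required family of monotone paths as a limit of piecewise-built ones, matching the prescribed data on the shell faces. Finally, because $X$ is a locally finite union of lattice boxes, carrying out this extension cell by cell produces a single, globally continuous map $f:X\to A$ satisfying property~(P).
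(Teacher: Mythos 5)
Your reduction to the cell-by-cell coning step is where the argument breaks down, and you have correctly located the problem yourself: the coning requires a \emph{continuous} selection $u \mapsto \gamma_u$ of monotone paths in $A$ from the fixed point $f(v_*)$ to the moving endpoint $g(u)$, matching prescribed data on lower-dimensional faces. That continuous selection of paths is essentially the full strength of what this proposition (and the whole first proof of Theorem \ref{theorem:closed}) is designed to deliver, so deferring it to a ``secondary induction'' leaves the core of the proof missing. The sketch you give for that secondary induction does not close the gap: refining the grid on the outer shell makes consecutive endpoint values $g(u_j)$, $g(u_{j+1})$ close, but monovexity gives no control relating the paths $\gamma_{u_j}$ and $\gamma_{u_{j+1}}$ at \emph{intermediate} times --- each is only known to lie in $\phull(\{f(v_*),g(u_j)\})$, a box whose size is a fixed macroscopic quantity that does not shrink as the shell grid is refined. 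Closeness of endpoints therefore does not yield closeness of paths, and the proposed limit/uniform-continuity argument never gets off the ground. (A secondary issue: the outer-shell facets of $R$ were filled at the previous stage by coning from \emph{their own} minimal vertices, not from $v_*$, so the claimed compatibility on $\partial R$ also needs more care; but this is minor next to the selection problem.)

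The paper's proof avoids this circularity by never constructing paths cell by cell. It defines $f$ only on the dyadic refinements $X \cap \tfrac{1}{2^k}\Gamma$: the value at the center of an $l$-dimensional box of the $k$-th grid is placed on a monotone path joining the two vertex images with extreme $i$-th coordinates, at the point where the $i$-th coordinate equals \emph{exactly} the average of those extremes, with the distinguished coordinate $i$ cycling through $1,\ldots,n$ as $k$ increases. Pinning one coordinate exactly at the midpoint forces the oscillation of each coordinate over an elementary box to halve every $n$ refinement steps, which gives a local $\tfrac{1}{n}$-H\"older estimate; $f$ then extends continuously from the dense set $\cup_k \tfrac{1}{2^k}\Gamma$ to all of $X$, and Property (P) holds because every newly placed value lies on a monotone path between previously placed points. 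That quantitative halving mechanism is exactly the ingredient your proposal lacks.
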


\begin{proof}
Assume w.l.o.g.~that $\Gamma = \dZ^m$.
To prove the result, we will define the function $f$ iteratively on the sets
$X \cap \left((\tfrac{1}{2^{k+1}}\Gamma) \setminus (\tfrac{1}{2^k}\Gamma)\right)$, $k=0,1,2,\ldots$,
and show that this definition can be extended to a continuous function over $X$.

For every natural number $r \in \dN$ and every integer $i$, $1 \leq i \leq n$, define a function
$\varphi_{r,i} : A^r \to A$ as follows.
Let $(q^{(1)},q^{(2)},\ldots,q^{(r)}) \in A^r$,
let $j_{\min}$ be an index in which $\min_{1 \leq j \leq r} q^{(j)}_i$ is attained,
and let $j_{\max}$ be an index in which $\max_{1 \leq j \leq r} q^{(j)}_i$ is attained.
Choose a continuous monotone curve $\gamma : [0,1] \to A$ connecting $q^{(j_{\min})}$ and $q^{(j_{\max})}$
(if $j_{\min}=j_{\max}$, the curve is constant).
By continuity there exists $t_0 \in [0,1]$ such that $\gamma_i(t_0) = \tfrac{q^{(j_{\min})}_i + q^{(j_{\max})}_i}{2}$.
Set
\[ \varphi_{r,i}(q^{(1)},q^{(2)},\ldots,q^{(r)}) := \gamma(t_0). \]

We now extend the function $f$ from $X \cap \tfrac{1}{2^k}\Gamma$ to $X \cap \tfrac{1}{2^{k+1}}\Gamma$, for $k=0,1,2,\ldots$.
Suppose then that $f : X \cap \tfrac{1}{2^k}\Gamma \to A$ is given,
and set $i := k+1\ (\modulo n)$.
Every $q \in X \cap \left((\tfrac{1}{2^{k+1}}\Gamma) \setminus (\tfrac{1}{2^k}\Gamma)\right)$
is the center of a unique $l$-dimensional box $R \in P_l(\tfrac{1}{2^k}\Gamma)$
that is contained in $X$ (where $1 \leq l \leq n$).
Define
\[ f(q) := \varphi_{2^l,i}(q^{(1)},q^{(2)},\ldots,q^{(2^l)}), \]
where $q^{(1)},q^{(2)},\ldots,q^{(2^l)}$ are the vertices of $R$.
Note that this definition verifies Property (P).

The function $f$ is locally uniformly continuous,%
\footnote{A function is \emph{locally uniformly continuous} if it is uniformly continuous on every bounded subset.}
and in fact, locally $\tfrac{1}{n}$-H\"older.
Indeed, for $k \in \dN$ and a box $R \in P(\tfrac{1}{2^k}\Gamma)$ denote
$M_i(R) := \max\{ |f_i(q^{(j)}) - f_i(q^{(m)})| \colon q^{(j)},q^{(m)} \in \vertices(R)\}$.
Let $N_i(R) := \max\{ M_i(S) \colon S  \in P(\tfrac{1}{2^{k+1}}\Gamma), S \subset R\}$
be the maximum of the corresponding quantity over all sub-boxes of $S$ that belong to $P(\tfrac{1}{2^{k+1}}\Gamma)$.
If $i = k+1\ (\modulo n)$ then $N_i(R) \leq M_i(R)/2$, while if $i \neq k+1\ (\modulo n)$ then $N_i(R) \leq M_i(R)$.
Since $i= k+1\ (\modulo n)$ infinitely often with step $n$ as $k$ increases,
and since $f$ satisfies Property (P),
it follows that $f$ is indeed locally $\tfrac{1}{n}$-H\"older continuous.

This implies that $f$ can be extended to a continuous function
$f : X \cap \left( \cup_{k=0}^\infty \tfrac{1}{2^k}\Gamma\right) \to A$.
The set $\cup_{k=0}^\infty \tfrac{1}{2^k}\Gamma$ is dense in $X$, hence $f$ can be extended
to a continuous function from $X$ to $A$ that satisfies Property (P).
The extended function $f$ is locally $\tfrac{1}{n}$-H\"older continuous as well.
\end{proof}

\bigskip

We would like to prove that there is a continuous function $f : A \times A \times [0,1] \to A$ that satisfies $f(x,y,0) = x$ and $f(x,y,1) = y$
for every $x,y \in A$.
In the next lemma we prove an approximate version of this result.
We will use it in Proposition \ref{lemma:lev3} below to prove the stronger version of the claim.

\begin{lemma}
\label{lemma:lev2}
Let $A \subset \dR^n$ be a closed monovex set.
For every $\delta > 0$ there exists a continuous function $g_\delta : A \times A \times [0,1] \to A$ such that for every
$x,y \in A$ we have:
\begin{enumerate}
\item   $d_\infty(x,g_\delta(x,y,0)) \leq \delta$ and $d_\infty(y,g_\delta(x,y,1)) \leq \delta$.
\item   $d_\infty(g_\delta(x,y,t),\phull(\{x,y\})) \leq \delta$ for every $t \in [0,1]$.
\end{enumerate}
\end{lemma}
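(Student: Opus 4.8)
The plan is to discretize the domain and to lean entirely on Proposition \ref{lemma:lev1}, whose property (P) does the geometric work for free. Regard $A \times A \times [0,1]$ as a subset of $\dR^{2n+1}$, with coordinates written $(x,y,t)$ where $x,y \in \dR^n$ and $t \in \dR$. Fix $\ep = 1/N$ with $N$ a large integer to be chosen at the end, and let $\Gamma = \ep \dZ^{2n+1}$, so that the hyperplanes $t=0$ and $t=1$ are grid hyperplanes. Let $X$ be the union of all boxes $R \in P(\Gamma)$ that meet $A \times A \times [0,1]$, and let $S = X \cap \Gamma$ be their vertices. Since the grid covers $\dR^{2n+1}$, we have $A \times A \times [0,1] \subseteq X$. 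I will define a map $f : S \to A$ on the vertices, extend it to a continuous $f : X \to A$ satisfying (P) by Proposition \ref{lemma:lev1}, and set $g_\delta := f|_{A \times A \times [0,1]}$.

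To define $f$ on a vertex $v = (x',y',t') \in S$, first note that $v$ is a vertex of some box meeting the domain, so each of $x',y'$ lies within $\ep$ (in $d_\infty$) of a point of $A$; since $A$ is closed, choose nearest points $\hat x', \hat y' \in A$ with $d_\infty(x',\hat x') \le \ep$ and $d_\infty(y',\hat y') \le \ep$. For each ordered pair $(x',y')$ occurring in $S$, use monovexity to fix once and for all a monotone path $\gamma_{x',y'} : [0,1] \to A$ from $\hat x'$ to $\hat y'$, and reparametrize it by a continuous nondecreasing surjection $\sigma : [0,1] \to [0,1]$ that is constant equal to $0$ on $[0,\ep]$ and constant equal to $1$ on $[1-\ep,1]$; write $\tilde\gamma_{x',y'} := \gamma_{x',y'} \circ \sigma$. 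Finally put $f(x',y',t') := \tilde\gamma_{x',y'}(t')$. The single fact I will exploit is that a monotone path lies in the b-hull of its endpoints: each coordinate $\gamma_i$ is monotone with $\gamma_i(0)=\hat x'_i$ and $\gamma_i(1)=\hat y'_i$, so $\gamma_{x',y'}([0,1]) \subseteq \phull(\{\hat x',\hat y'\})$, and reparametrization does not leave this box. Thus every vertex value $f(v)$ lies in $\phull(\{\hat x',\hat y'\})$.

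The verification is then short. Fix $(x,y,t)$ and a full box $R \subseteq X$ containing it; by (P), $g_\delta(x,y,t) \in \phull(f(\vertices(R)))$, and every vertex of $R$ has its $x'$- and $y'$-parts within $\ep$ of $x$ and $y$. For condition 2, each $f(v) \in \phull(\{\hat x',\hat y'\})$ with $\hat x'$ within $2\ep$ of $x$ and $\hat y'$ within $2\ep$ of $y$ coordinatewise, so $f(v)$, and hence its b-hull, lies within $2\ep$ of $\phull(\{x,y\})$; this gives $d_\infty(g_\delta(x,y,t),\phull(\{x,y\})) \le 2\ep$. For condition 1 at $t=0$, the box containing $(x,y,0)$ has all its vertices at times $t' \in \{0,\ep\}$, where $\sigma \equiv 0$, so every vertex value equals the corresponding $\hat x'$, which lies within $2\ep$ of $x$; hence $g_\delta(x,y,0)$ is within $2\ep$ of $x$, and symmetrically at $t=1$. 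Choosing $N$ so that $\ep = 1/N \le \delta/2$ finishes the proof.

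The main obstacle is the endpoint condition 1 rather than condition 2: property (P) confines the extension to a neighborhood of $\phull(\{x,y\})$ automatically, but it says nothing about where the path sits along the $t$-axis, so without care $g_\delta(x,y,0)$ could be pulled far from $x$ by a neighboring vertex whose path has already moved. The flat reparametrization near $t=0$ and $t=1$ is exactly the device that pins the values down at the endpoints while costing nothing in the b-hull estimate. A minor point to check is that the arbitrary, noncontinuous choices of nearest point $\hat x'$ and of path $\gamma_{x',y'}$ on the discrete set $S$ are harmless, since all continuity is supplied afterwards by Proposition \ref{lemma:lev1}.
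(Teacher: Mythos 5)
Your proof is correct, and it rests on the same engine as the paper's: discretize the domain, define $f$ on the lattice vertices, extend by Proposition \ref{lemma:lev1}, and let Property (P) plus the triangle inequality do the rest. The difference is in how the $t$-axis is treated. The paper takes the lattice $\widetilde\Gamma = \Gamma \times \Gamma \times \dZ$, i.e.\ mesh size $1$ in the time direction, so the only grid values of $t$ are $0$ and $1$; the vertex data is then just ``any point of $A$ within $\delta/2$ of $x$'' at $t=0$ and ``any point of $A$ within $\delta/2$ of $y$'' at $t=1$, and Property (P) applied to the full box $Q \times R \times [0,1]$ yields both conclusions at once, with no explicit paths constructed at this stage (the monotone paths live only inside Proposition \ref{lemma:lev1}). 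You instead refine the $t$-axis to mesh $\ep$, which forces you to populate the vertices with values of an actual monotone path $\gamma_{x',y'}$ and to introduce the flat reparametrization $\sigma$ near $t=0,1$ to rescue the endpoint condition; you also need the (correct) extra observation that a monotone path lies in the b-hull of its endpoints. Both arguments work, but the paper's coarse time-lattice makes the obstacle you identify as the ``main obstacle'' disappear entirely. Two small points to tidy in your version: the vertex set $S$ contains vertices with $t' = -\ep$ and $t' = 1+\ep$ (the boxes with $t$-range $[-\ep,0]$ and $[1,1+\ep]$ do meet $A \times A \times [0,1]$), where $\tilde\gamma_{x',y'}(t')$ is undefined as written, so extend $\sigma$ by the constants $0$ and $1$ outside $[0,1]$; and you need $\ep < 1/2$ for $\sigma$ to be surjective, which your final choice of $N$ already ensures.
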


\begin{proof}
Fix $\delta > 0$. Consider the lattice $\Gamma := \tfrac{\delta}{2}\dZ^n$,
and denote by $X$ the union of all boxes $R \in P(\Gamma)$ that satisfy $R \cap A \neq \emptyset$.
Denote $\widetilde X := X \times X \times [0,1] \subseteq \dR^{2n+1}$,
$\widetilde\Gamma := \Gamma \times \Gamma \times \dZ$,
and $\widetilde S := \widetilde X \cap \widetilde \Gamma$.

Let $f : \widetilde S \to A$ be any function that satisfies the following property:
for every $x,y \in X \cap \Gamma$ we have
$d_\infty(x,f(x,y,0)) \leq \tfrac{\delta}{2}$
and $d_\infty(y,f(x,y,1)) \leq \tfrac{\delta}{2}$.
Such a function exists
since every $x \in X \cap\Gamma$ is a vertex of a box $R \in P(\Gamma)$ whose sidelength is $\tfrac{\delta}{2}$ with $R \cap A \neq\emptyset$.

By Proposition \ref{lemma:lev1}, the function $f$ can be extended to a continuous function $f : \widetilde X \to A$ that satisfies Property (P).
In particular, for every two boxes $Q,R \in P(\Gamma)$ lying in $X$, we have:
\begin{itemize}
\item
$f(Q \times R \times \{0\})$ is contained in the b-hull of $f(\vertices(Q) \times \vertices(R) \times \{0\})$;
\item
$f(Q \times R \times \{1\})$ is contained in the b-hull of $f(\vertices(Q) \times \vertices(R) \times \{1\})$.
\end{itemize}
Moreover, for every $x \in Q$, every $y \in R$, every $q \in \vertices(Q)$, and every $r \in \vertices(R)$ we have
$d_\infty(x,q) \leq \tfrac{\delta}{2}$,
$d_\infty(y,r) \leq \tfrac{\delta}{2}$,
$d_\infty(q,f(q,r,0)) \leq \tfrac{\delta}{2}$,
and
$d_\infty(r,f(q,r,1)) \leq \tfrac{\delta}{2}$.
By the triangle inequality it follows that
$d_\infty(x,f(q,r,0)) \leq \delta$
and
$d_\infty(y,f(q,r,1)) \leq \delta$.
We conclude that given $x \in Q$ and $y \in R$,
for every $q \in \vertices(Q)$ and $r \in \vertices(R)$ we have
$d_\infty(x,f(q,r,0)) \leq \delta$
and
$d_\infty(y,f(q,r,1)) \leq \delta$.
Therefore, since $f(x,y,0) \in \phull(f(\vertices(Q) \times \vertices(R) \times \{0\}))$ and
$f(x,y,1) \in \phull(f(\vertices(Q) \times \vertices(R) \times \{1\}))$,
we have
$d_\infty(x,f(x,y,0)) \leq \delta$
and
$d_\infty(y,f(x,y,1)) \leq \delta$.

In addition, since $f$ satisfies Property (P),
the image $f(Q \times R \times [0,1])$ is contained in the b-hull of
$f(\vertices(Q) \times \vertices(R) \times \{0,1\})$,
and hence we also conclude that
$d_\infty(f(x,y,t),\phull(\{x,y\})) \leq \delta$ for every $t \in [0,1]$.

To summarize, for every $x,y \in X$ and every $t \in [0,1]$ we have
\begin{itemize}
\item   $d_\infty(x,f(x,y,0)) \leq \delta$,
\item   $d_\infty(y,f(x,y,1)) \leq \delta$, and
\item   $d_\infty(f(x,y,t),\phull(\{x,y\})) \leq \delta$.
\end{itemize}
To end the proof of the lemma, define $g_\delta$ to be the restriction of $f$ to $A \times A \times [0,1]$.
\end{proof}

\begin{proposition}
\label{lemma:lev3}
There exists a continuous function $\varphi : A \times A \times [0,1]\to A$ such that
$\varphi(x,y,0) = x$ and $\varphi(x,y,1) = y$ for every $x,y \in A$.
\end{proposition}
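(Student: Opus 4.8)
The plan is to build $\varphi$ by splicing together, at geometrically shrinking time scales, the approximate connectors supplied by Lemma \ref{lemma:lev2} for a summable sequence $\delta_0 > \delta_1 > \cdots \downarrow 0$, and then to pass to a uniform limit so that the approximate endpoints become exact. Fix such a sequence and abbreviate $g_k := g_{\delta_k}$. On a central time interval I would place a reparametrization of $g_0(x,y,\cdot)$, which runs from $P_0(x,y) := g_0(x,y,0)$ to $Q_0(x,y) := g_0(x,y,1)$; by property~1 of Lemma \ref{lemma:lev2} these already satisfy $d_\infty(x,P_0) \le \delta_0$ and $d_\infty(y,Q_0) \le \delta_0$. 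The coarse path thus joins the $x$-side to the $y$-side, and the remaining task is to connect $x$ to $P_0$ on the left and $Q_0$ to $y$ on the right, which I do recursively.

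To that end I define left anchors $P_k(x,y) := g_k(x, P_{k-1}(x,y), 0)$ and right anchors $Q_k(x,y) := g_k(Q_{k-1}(x,y), y, 1)$ for $k \ge 1$. Since the first (respectively last) argument of $g_k$ is $x$ (respectively $y$), property~1 gives $d_\infty(x,P_k) \le \delta_k$ and $d_\infty(y,Q_k) \le \delta_k$, so $P_k \to x$ and $Q_k \to y$ uniformly in $(x,y)$. On a sequence of left time intervals shrinking geometrically toward $0$ I would place the reparametrized correction $g_k(x, P_{k-1}, \cdot)$, which begins exactly at $P_k$ and ends within $\delta_k$ of $P_{k-1}$; the right intervals are treated symmetrically, and I set $\varphi(x,y,0) := x$ and $\varphi(x,y,1) := y$. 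Each $g_k$ is continuous in $(x,y,t)$ and the anchors are continuous as compositions, so on the interior of each subinterval the map is jointly continuous; moreover property~2 confines the $k$-th correction to within $\delta_k$ of $\phull(\{x,P_{k-1}\})$, a box of $d_\infty$-diameter at most $\delta_{k-1}$, hence to within $\delta_{k-1}+\delta_k$ of $x$. As these bounds are uniform in $(x,y)$ and tend to $0$, the left pieces converge to $x$ as $t \downarrow 0$ and the right pieces to $y$ as $t \uparrow 1$, which yields continuity at the two accumulation times together with the exact boundary values $\varphi(x,y,0)=x$ and $\varphi(x,y,1)=y$.

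The hard part is continuity at the junction times, where the end of one correction meets the start of the next: by construction these two one-sided values differ by at most $\delta_k$, but they need not coincide, so the naive splice is discontinuous there. To remove these mismatches I would run a further, uniformly convergent round of bridging, inserting at each junction — on an even smaller subinterval — a still finer connector $g_{k'}$ that joins the two one-sided limits (which are already within $\delta_k$ of one another), choosing $k'$ so large that the newly created mismatches sum to an arbitrarily small fraction of $\delta_k$. Iterating this, the supremum of the junction mismatches is forced to $0$ while the successive functions form a uniform Cauchy sequence, the telescoping estimate again coming from property~2. The uniform limit is then continuous on all of $A \times A \times [0,1]$, takes values in the closed set $A$, and satisfies the endpoint conditions; since every estimate above is uniform in $(x,y)$, the limit is jointly continuous, and this is the desired $\varphi$. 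I expect the verification that this bridging can be organized into a genuinely uniformly convergent scheme — rather than merely a pointwise one — to be the single delicate point of the argument.
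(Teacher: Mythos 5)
Your overall strategy --- splicing together the approximate connectors $g_{\delta_k}$ at shrinking time scales and passing to a uniform limit using $\sum_k \delta_k < \infty$ --- is exactly the strategy of the paper, and the parts you carry out in detail are sound: the anchors $P_k,Q_k$ are continuous, property~1 of Lemma \ref{lemma:lev2} makes them converge uniformly to $x$ and $y$, and the confinement estimate from property~2 correctly forces the left pieces to converge to $x$ as $t\downarrow 0$ and the right pieces to $y$ as $t\uparrow 1$, giving the exact boundary values. The genuine gap is the one you flag yourself: you never actually produce a scheme in which the junction repairs converge. Each inserted connector has only approximately correct endpoints, so every repair creates two new mismatches; "iterating the bridging" must be organized so that (i) the subintervals used at successive repair stages nest consistently inside $[0,1]$, and (ii) along every infinite chain of nested junctions the accumulated oscillation is summable. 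Your criterion of choosing $k'$ so that the new mismatches are "a small fraction of $\delta_k$" is not the right bookkeeping: you have not specified on which set of times the limit function is defined, nor why the one-sided limits agree at the times left uncovered after all stages, which is precisely where continuity could fail.

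The paper closes this gap with a Cantor-set construction that replaces your three-part decomposition (central interval, two accumulating tails, ad hoc repairs) by a single uniform recursion. Set $\varphi(x,y,0):=x$ and $\varphi(x,y,1):=y$ at level $0$; at level $k\ge 1$, for each removed middle third $[s,s']$ of a level-$(k-1)$ Cantor interval $[t,t']$, define $\varphi(x,y,(1-\lambda)s+\lambda s') := g_{\delta_k}(\varphi(x,y,t),\varphi(x,y,t'),\lambda)$, the arguments being already defined at earlier levels. Every remaining gap at level $k$ is then a Cantor interval $[t,s]$ whose endpoint values differ by at most $\delta_k$ (property~1), and property~2 bounds the oscillation of $\varphi$ over all defined times inside a level-$k$ Cantor interval by roughly $\sum_{j\ge k}2\delta_j\to 0$. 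This is the uniform-in-$(x,y)$ equicontinuity estimate that lets one extend $\varphi$ from the dense set of defined times to all of $A\times A\times[0,1]$, with values in $A$ because $A$ is closed; the endpoints $0$ and $1$ are handled exactly at level $0$, so no limiting argument is needed there. Your construction can very likely be repaired along the same lines, but as written the decisive convergence argument is missing.
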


We note that Proposition \ref{lemma:lev3} implies Theorem \ref{theorem:closed}.
Indeed, choose an arbitrary $x_0 \in A$.
The function $G : A \times [0,1] \to A$ defined by $G(x,t) := \varphi(x,x_0,t)$ for every $x \in A$ and $t \in [0,1]$
is a homotopy between $A$ and $\{x_0\}$.

\bigskip

\begin{proof}[Proof of Proposition \ref{lemma:lev3}]
Let $(\delta_k)_{k=1}^\infty$ be a sequence of positive reals such that $\sum_{k=1}^\infty \delta_k < \infty$.
We define the function $\varphi$ in steps by a Cantor set construction.
Define $C_0 := \{0,1\}$, $C_1 := \{ [\tfrac{1}{3},\tfrac{2}{3}] \}$, and for every $k \geq 2$ let $C_k$
be the collection of all closed intervals $[s,s']$ where $s = \tfrac{1}{3^k} + \sum_{j=1}^{k-1}\tfrac{\alpha_j}{3^j}$
and $s' = \tfrac{2}{3^k} + \sum_{j=1}^{k-1}\tfrac{\alpha_j}{3^j}$, for some $\alpha_j \in \{0,2\}$, $j=1,2,\ldots,k-1$ (see Figure \arabic{figurecounter}).

\bigskip
\centerline{\includegraphics{figure.4}}
\centerline{Figure \arabic{figurecounter}: The sets $C_k$ for $k=0,1,2,3$.}
\addtocounter{figurecounter}{1}
\bigskip

For $k = 0,1,\ldots$, in step $k$ we define $\varphi$ on $A \times A \times \left(\cup_{[s,s'] \in C_k} [s,s']\right)$.
For $k=0$ set
\[ \varphi(x,y,0) := x, \ \ \ \varphi(x,y,1) := y, \ \ \ \forall x,y \in A. \]
For $k \geq 1$, consider an interval $[s,s'] \in C_k$ and set $t := s - \tfrac{1}{3^k}$ and $t' := s' + \tfrac{1}{3^k}$.
If $k=1$, the points $t$ and $t'$ are 0 and 1, which lie in $C_0$.
If $k \geq 2$, one of these points is an endpoint of an interval in $C_{k-1}$ and
the other is an endpoint of an interval in $C_{k-2}$.
In both cases $\varphi(\cdot,\cdot,t)$ and $\varphi(\cdot,\cdot,t')$ were already defined.
Set
\[ \varphi(x,y,(1-\lambda)s+\lambda s') := g_{\delta_k}(\varphi(x,y,t),\varphi(x,y,t'),\lambda), \ \ \ \forall x,y \in A, \forall \lambda \in [0,1], \]
where $g_{\delta_k}$ satisfies the statement of Lemma \ref{lemma:lev2}.
The procedure described above defines $\varphi$ on $A \times A \times \left(\cup_{k=0}^\infty \cup_{[s,s'] \in C_k} [s,s']\right)$.
The set $\cup_{k=0}^\infty \left(\cup_{[s,s'] \in C_k} [s,s']\right)$ is dense on $[0,1]$, hence $\varphi$ is defined in a dense subset of $A \times A \times [0,1]$.
Since $\sum_{k=1}^\infty \delta_k < \infty$, the function $\varphi$ is in fact locally uniformly continuous,
hence it can be extended to a continuous function $\varphi : A \times A \times [0,1] \to A$,
as desired.
\end{proof}

\subsection{Second Proof}
\label{proof:1}

We first argue that if $A$ is a monovex set
and $R$ is an open box whose faces are parallel to the axes,
then $A+R$ is monovex.
We note that the proof is valid also when the box $R$ is closed.

\begin{lemma}
\label{lemma:0}
If the set $A \subset \dR^n$ is monovex and $R \subset \dR^n$ is an open box whose faces are parallel to the axes,
then the set $A + R$ is monovex.
\end{lemma}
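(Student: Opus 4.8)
The plan is to fix $x,y\in A+R$, write $x=a+u$ and $y=b+w$ with $a,b\in A$ and $u,w\in R$, and use the monovexity of $A$ to produce a continuous monotone path $\gamma\colon[0,1]\to A$ with $\gamma(0)=a$ and $\gamma(1)=b$. I would then seek the desired path in the form $P(t)=\gamma(t)+s(t)$, where $s\colon[0,1]\to R$ is continuous with $s(0)=u$ and $s(1)=w$. Any such $P$ automatically lies in $A+R$ and joins $x$ to $y$, so the only thing to arrange is that each coordinate $P_i=\gamma_i+s_i$ be monotone.

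Writing $R=\times_{i=1}^n(c_i,d_i)$, the constraints on $s$ decouple across coordinates, so it suffices to solve, for each $i$ separately, the following one-dimensional problem: given a continuous monotone $g:=\gamma_i$ and numbers $p:=u_i,\,q:=w_i\in(c_i,d_i)$, find a continuous monotone $h:=P_i$ with $h(0)=g(0)+p$ and $h(1)=g(1)+q$ that stays in the moving \emph{tube} $\{(t,z)\colon g(t)+c_i<z<g(t)+d_i\}$; one then recovers $s_i:=h-g$, which lands in $(c_i,d_i)$ by construction. Note that the direction of monotonicity of $h$ is forced by the sign of $h(1)-h(0)$.

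I would split this one-dimensional problem into two cases according to whether $h$ runs in the same direction as $g$ or in the opposite one (up to reflecting the $i$-th coordinate, which preserves monovexity and maps $R$ to a box, assume $g$ is nondecreasing). If $h$ is to be nonincreasing, then the tube walls $g+c_i$ and $g+d_i$ are nondecreasing while $h$ decreases, so the gaps $h-(g+c_i)$ and $(g+d_i)-h$ are monotone in opposite senses and strictly positive at the relevant endpoint; hence \emph{any} straight-line $h$ stays in the tube and this case is immediate. The delicate case is when $h$ must be nondecreasing, so that $h$ and both tube walls move in the same direction and a straight line may leave the tube. Here I would take $h(t):=\min\{g(1)+q,\ \max_{0\le\tau\le t}(g(\tau)+\ell(\tau))\}$, where $\ell$ interpolates linearly from $p$ to $q$: the running maximum forces $h$ to be continuous and nondecreasing, the outer clamp pins $h(1)=g(1)+q$ without overshoot, and $h(0)=g(0)+p$ holds automatically since $g(0)+p\le g(1)+q$ in this case. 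A short estimate using $\min(p,q)>c_i$ and $\max(p,q)<d_i$, together with $g$ being nondecreasing, then gives $g(t)+c_i<h(t)<g(t)+d_i$ throughout.

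Finally I would assemble the coordinate solutions into $P(t)=\gamma(t)+s(t)$ and verify that $P$ is continuous, monotone in each coordinate, satisfies $P(0)=x$ and $P(1)=y$, and takes values in $A+R$, which establishes that $A+R$ is monovex; the same estimates carry over verbatim with non-strict inequalities to cover the closed-box remark. I expect the main obstacle to be exactly the co-monotone one-dimensional case: naive linear interpolation of $s$ need not keep $\gamma_i+s_i$ monotone, and it is the running-maximum device (or an equivalent construction) that simultaneously reconciles monotonicity, the prescribed interior endpoints, and containment in a tube whose two walls move in the same direction as $h$.
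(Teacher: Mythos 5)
Your proof is correct, but the way you resolve the key difficulty --- keeping $\gamma_i+s_i$ monotone when the path in $A$ and the shift must move in the same direction --- is genuinely different from the paper's. The paper avoids both the case split and the running-maximum device: writing $x=x'+a'$, $y=y'+b'$ and normalizing so that $x'_i\le y'_i$ for all $i$, it takes the shift in each coordinate $i$ with $x'_i<y'_i$ to be an \emph{affine function of $\gamma'_i$ itself}, namely $\delta_i(t)=(D\gamma'(t)+v)_i$ with the diagonal entry of $D$ and the entry of $v$ chosen so that $x'_i\mapsto a'_i$ and $y'_i\mapsto b'_i$; then $\gamma_i+\delta_i=((I+D)\gamma'+v)_i$ is monotone automatically, being an affine image of the monotone $\gamma'_i$, and $\delta_i$ stays in $(c_i,d_i)$ because it is monotone with endpoints $a'_i,b'_i$ in that interval. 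Coordinates with $x'_i=y'_i$ are handled separately, using that $\gamma'_i$ is then constant. This buys a short, case-free argument. Your clamped running maximum is more hands-on but equally valid: the sketched tube estimate does go through (for the upper wall, $\max_{\tau\le t}(g(\tau)+\ell(\tau))\le g(t)+\max(p,q)<g(t)+d_i$ since $g$ is nondecreasing; for the lower wall, both $\max_{\tau\le t}(g(\tau)+\ell(\tau))\ge g(t)+\min(p,q)>g(t)+c_i$ and $g(1)+q\ge g(t)+q>g(t)+c_i$), and your counter-monotone case is indeed immediate. A minor advantage of your route is that it never needs to single out the degenerate coordinates, since the one-dimensional construction covers constant $g$ uniformly.
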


\begin{proof}
Let $x,y \in A + R$.
Then $x=x'+a'$ and $y=y'+b'$, where $x',y' \in A$ and $a',b' \in R$.
Assume w.l.o.g.~that $x'_i \leq y'_i$ for every $i \in \{1,2,\ldots,n\}$,
and let $\gamma' : [0,1] \to A$ be a continuous monotone path that connects $x'$ to $y'$.
Let $J := \{ i \colon 1 \leq i \leq n, x'_i < y'_i\}$.
There are a diagonal matrix $D \in \mathcal{M}_{n,n}(\dR)$ and a vector $v \in \dR^n$ such that
$(D x' + v)_i = a'_i$ and $(D y' + v)_i = b'_i$ for every coordinate $i \in J$.
Define for every coordinate $i \in \{1,2,\ldots,n\}$ a continuous function $\delta_i \colon [0,1] \to \dR$ as follows:
\begin{itemize}
\item   If $i \in J$ then $\delta_i(t) := (D\gamma'(t) + v)_i$.
\item   If $i \not\in J$ then $\delta_i$ is any continuous monotone function that satisfies $\delta_i(0) = a'_i$ and
$\delta_i(1) = b'_i$.
\end{itemize}
Since $\delta_i$ is monotone for every coordinate $i$ and since $\delta(0) = a'$ and $\delta(1) = b'$,
we have $\delta(t) \in R$ for every $t \in [0,1]$.

The path $\gamma := \gamma' + \delta$ satisfies the following properties,
which imply that $\gamma$ is a continuous monotone path in $A+R$ from $x$ to $y$.
\begin{itemize}
\item $\gamma(0) = x' + a' = x$ and $\gamma(1) = y'+b' = y$.
\item   $\gamma(t) \in A + R$ for every $t \in [0,1]$.
\item
For every coordinate $i \in J$ we have $\gamma_i = ((I + D)\gamma' + v)_i$.
Since $I+D$ is a diagonal matrix, the function $\gamma_i$ is monotone.
\item
For every coordinate $i \not\in J$ we have $\gamma_i = x'_i + \delta_i$,
and therefore in this case $\gamma_i$ is monotone as well.
\end{itemize}
Since $x$ and $y$ are arbitrary, the result follows.
\end{proof}

\bigskip
We will use the following extension of Michael's selection theorem to monovex-valued functions.

\begin{lemma}
\label{lemma:michael}
Let $X \subseteq\dR^n$ and let $F : X \rightrightarrows \dR^m$ be a set-valued function with open graph and nonempty monovex values.
Then $F$ has a continuous selection: there is a continuous function $f : X \to \dR^m$ that satisfies $f(x) \in F(x)$ for every $x \in X$.
\end{lemma}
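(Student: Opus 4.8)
The plan is to prove the lemma by induction on the dimension $m$ of the target space, peeling off one coordinate at a time so as to reduce the monovex-valued problem to a sequence of interval-valued (hence convex-valued) problems, each of which is settled by the classical Michael selection theorem quoted above. For $m=1$ there is nothing new to do: every monovex subset of $\dR$ is convex, so $F$ has nonempty convex values and open graph, and the cited form of Michael's theorem produces a continuous selection directly.

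For the inductive step, suppose the result holds for target dimension $m-1$ and let $F:X\rightrightarrows\dR^m$ have open graph and nonempty monovex values. Let $\pi:\dR^m\to\dR^{m-1}$ be the projection onto the first $m-1$ coordinates and set $G(x):=\pi(F(x))$. The values $G(x)$ are nonempty and, being projections of monovex sets onto a coordinate subspace, are monovex; moreover $\graph(G)$ is the image of $\graph(F)$ under the coordinate projection $X\times\dR^m\to X\times\dR^{m-1}$, and is therefore open. The induction hypothesis then yields a continuous selection $g:X\to\dR^{m-1}$ with $g(x)\in G(x)$. I would next define the fibre map
\[ H(x):=\{\,t\in\dR : (g(x),t)\in F(x)\,\}. \]
Each $H(x)$ is nonempty because $g(x)\in\pi(F(x))$, and $\graph(H)$ is open because it is the preimage of the open set $\graph(F)$ under the continuous map $(x,t)\mapsto(x,g(x),t)$. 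Once every $H(x)$ is known to be an interval, the $m=1$ case applies to $H$ and gives a continuous $h:X\to\dR$ with $h(x)\in H(x)$; then $f:=(g,h):X\to\dR^m$ is continuous and satisfies $f(x)=(g(x),h(x))\in F(x)$, completing the induction.

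The one substantive point, and the step I expect to carry the weight of the argument, is the claim that each slice $H(x)$ is convex, i.e.\ that the intersection of a monovex set with a line parallel to the last coordinate axis is an interval. This is exactly where monovexity is used. Given $t_1<t_2$ in $H(x)$, monovexity of $F(x)$ provides a monotone path $\gamma:[0,1]\to F(x)$ from $(g(x),t_1)$ to $(g(x),t_2)$; for each of the first $m-1$ coordinates the function $\gamma_j$ is monotone with equal endpoints $\gamma_j(0)=\gamma_j(1)=g_j(x)$, and is therefore constant. Hence $\gamma$ stays on the line through $(g(x),t_1)$ and $(g(x),t_2)$, its last coordinate runs monotonically from $t_1$ to $t_2$, and so every intermediate value $t\in[t_1,t_2]$ satisfies $(g(x),t)\in F(x)$, that is, $t\in H(x)$. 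With slice-convexity in hand, all remaining verifications (nonemptiness and openness of the relevant graphs, and continuity of the composed selection) are routine, so the only genuinely new ingredients beyond the cited Michael theorem are this monotone-path observation together with the stability of monovexity under coordinate projections.
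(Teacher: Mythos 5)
Your proof is correct and follows essentially the same route as the paper: induction on $m$, splitting off one coordinate of the target, using that coordinate projections and coordinate slices of a monovex set are again monovex (the one-dimensional slices being intervals), and invoking the classical Michael theorem for the convex-valued case. The only difference is cosmetic --- the paper peels off the \emph{first} coordinate and applies the induction hypothesis to a slice map $F_2$ defined on $\graph(F_1)\subseteq\dR^{n+1}$, whereas you peel off the \emph{last} coordinate and define the fibre map $H$ on $X$ itself via the already-chosen selection $g$; your explicit monotone-path argument that the fibres are intervals is precisely the verification the paper leaves implicit when it asserts that $F_1$ and $F_2$ have monovex values.
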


\begin{proof}
We prove the result by induction on $m$.
If $m=1$ then the values of $F$ are convex, hence by Michael's selection theorem $F$ has a continuous selection $f$.

Assume now that $m > 1$.
Let $F_1 : X \rightrightarrows \dR$ be the projection of $F$ to its first coordinate:
\[ F_1(x) = \{y_1 \in \dR \colon (y_1,y_2,\ldots,y_m) \in F(x) \hbox{ for some } (y_2,\ldots,y_m) \in \dR^{m-1}\}. \]
Let $F_2 : \graph(F_1) \rightrightarrows \dR^{m-1}$ be the set-valued function defined by
\[ F_2(x,y_1) := \{ (y_2,\ldots,y_m) \in \dR^{m-1} \colon (y_1,y_2,\ldots,y_m) \in F(x)\}. \]
The set-valued functions $F_1$ and $F_2$ have open graphs and monovex values, hence by the induction hypothesis applied to both of them
there are continuous selections $f_1$ of $F_1$ and $f_2$ of $F_2$.
The function $g : X \to \dR^m$ defined by $g(x) := (f_1(x),f_2(x,f_1(x)))$ is a continuous selection of $F$.
\end{proof}

\begin{definition}
Let $U \subseteq \dR^n$ be an open set, let $\ep : U \to (0,1]$ be a continuous function,
and let $F : U \rightrightarrows \dR^m$ be a set-valued function.
The \emph{$\ep$-neighborhood} of $F$ is the set
\[ \calN_\ep(F) := \bigcup_{(x,y) \in \graph(F)} B((x,y),\ep(x)) \subseteq \dR^{n+m}. \]
\end{definition}

The following result states that every set-valued function with a relatively closed graph
and compact monovex values can be approximated by a set-valued function with an open graph and monovex values.

\begin{lemma}
\label{proposition:1}
Let $U \subseteq \dR^n$ be an open set, let $\ep : U \to (0,1]$ be a continuous function,
and let $F : U \rightrightarrows \dR^m$ be a set-valued function with a relatively closed graph and compact monovex values.
There exists a set-valued function $G : U \rightrightarrows \dR^m$ with an open graph and monovex values satisfying
$\graph(F) \subseteq \graph(G) \subseteq \calN_\ep(F)$.
\end{lemma}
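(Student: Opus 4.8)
The plan is to fatten the graph of $F$ into an open set by replacing each point of $\graph(F)$ with a small open box and taking the union, choosing the box-sizes small enough that upper semicontinuity keeps each vertical fibre close to a box-neighborhood of a single value $F(x)$. Since $F$ has a relatively closed graph and compact values it is upper semicontinuous, and using this together with the continuity of $\ep$ I would first fix a continuous function $\delta : U \to (0,1]$ with $\delta \le \ep$, small and slowly varying, arranged so that for every $x$ and every $x_0$ with $d_\infty(x,x_0) < 2\delta(x_0)$ one has $F(x_0) \subseteq F(x) + B(\vec 0,\delta(x))$ (possible by upper semicontinuity and compactness of the values, after shrinking $\delta$). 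I would then define $G$ through its graph,
\[ \graph(G) := \bigcup_{(x_0,a_0) \in \graph(F)} B(x_0,\delta(x_0)) \times B(a_0,\delta(x_0)) \subseteq \dR^{n+m}, \]
so that $G(x) = \bigcup\{ B(a_0,\delta(x_0)) : (x_0,a_0) \in \graph(F),\ d_\infty(x,x_0) < \delta(x_0)\}$.

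Three of the required properties are then immediate. The graph of $G$ is open, being a union of open product sets. The inclusion $\graph(F) \subseteq \graph(G)$ holds because each $(x_0,a_0)$ lies in its own box, and in particular every value $G(x)$ is nonempty. Finally $\graph(G) \subseteq \calN_\ep(F)$, since any $(x,y)$ in the box attached to $(x_0,a_0)$ satisfies $d_\infty((x,y),(x_0,a_0)) < \delta(x_0) \le \ep(x_0)$, hence lies in $B((x_0,a_0),\ep(x_0))$.

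The one substantial point, and the step I expect to be the main obstacle, is that each fibre $G(x)$ is monovex. Writing $M := F(x)$ and letting $K$ be the union of the values $F(x_0)$ over the contributing $x_0$, the choice of $\delta$ forces $M \subseteq K \subseteq M + B(\vec 0,\eta)$ for a radius $\eta$ no larger than the thickening radius, so that $G(x)$ is squeezed between the monovex core $M + B(\vec 0,r)$ (monovex by Lemma \ref{lemma:0}) and a slightly larger box-neighborhood of $M$. Thus it suffices to prove the following geometric claim: if $M$ is monovex and $M \subseteq K \subseteq M + B(\vec 0,\eta)$ with $\eta \le r$, then $K + B(\vec 0,r)$ is monovex.

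To prove the claim I would take $y_1,y_2 \in K + B(\vec 0,r)$, write $y_i = k_i + s_i$ with $k_i \in K$ and $d_\infty(s_i,\vec 0) < r$, pick $b_i \in M$ with $d_\infty(k_i,b_i) < r$, and join $b_1,b_2$ by a monotone path inside $M$. The path from $y_1$ to $y_2$ is then built by first moving each $y_i$ into the core $M + B(\vec 0,r)$ and then following the monotone path across the core, all of which stays in $K + B(\vec 0,r)$ precisely because $\eta \le r$ leaves enough box slack to anchor the straight pieces at $k_i$. The delicate part is that monotone paths do not concatenate into a monotone path, so the routing must be arranged to be globally monotone in each coordinate; here the hypothesis $\eta \le r$ is essential, being exactly the condition that the thickening radius dominates the domain-variation of $F$ and thereby ruling out the ``tall and thin'' configurations that would otherwise destroy monovexity. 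I would verify global monotonicity by a coordinatewise slice argument, choosing the auxiliary points $b_i$ and the order in which coordinates are adjusted so that no coordinate ever reverses direction. Everything outside this claim is routine.
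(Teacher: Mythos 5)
The central step of your construction---choosing a continuous gauge $\delta : U \to (0,1]$ such that $F(x_0) \subseteq F(x) + B(\vec 0,\delta(x))$ whenever $d_\infty(x,x_0) < 2\delta(x_0)$---is impossible in general, and this is where the argument breaks. A map with relatively closed graph and compact values is upper semicontinuous, but upper semicontinuity controls $F(x_0)$ by a neighborhood of $F(x)$ only when the value can ``jump up'' at $x$; it gives nothing when $F$ jumps up exactly at $x_0$ and $x$ sits on the small side. Concretely, take $n=m=1$, $U=\dR$, $F(x)=\{0\}$ for $x<0$ and $F(x)=[0,1]$ for $x\ge 0$: this $F$ has a relatively closed graph and compact convex values, yet for any continuous $\delta>0$ there are points $x\in(-2\delta(0),0)$, for which your condition with $x_0=0$ would force $[0,1]\subseteq\{0\}+B(0,\delta(x))$, i.e.\ $\delta(x)>1$, contradicting $\delta\le\ep\le 1$. (Anchoring the condition at $x$ instead of $x_0$ does not help: continuity of $\delta$ then forces $\delta(0)=0$.) Consequently the squeeze $M\subseteq K\subseteq M+B(\vec 0,\eta)$ with $\eta$ below the thickening radius fails for the fibres of your $G$ just before an upward jump of $F$: those fibres contain a large ``foreign'' piece coming from $F(x_0)$ that is nowhere near $F(x)$, and your reduction to the geometric claim collapses.

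The paper's proof is engineered precisely around this obstruction: the local radii $\delta_x$ of its Step 3 are \emph{not} continuous (they degenerate as one approaches a jump from the small side); the values are first pixelated onto a lattice of mesh $\eta(x)$ so that $F_1(y)\subseteq F_1(x)$ for all $y\in B(x,\delta_x)$; and the union defining $G(x)$ runs over the symmetrized index set $Q(x)=\{y\colon x\in B(y,\tfrac{\delta_y}{2})\}$, which guarantees (Step 5) that any two contributing sets are nested, so that any two points of $G(x)$ already lie in a single set $F_1(y)+B(\vec 0,\eta(y))$, monovex by Step 2 and Lemma \ref{lemma:0}. Separately, your remaining geometric claim is deferred at exactly its hard point---the globally monotone routing---which is what the paper's Step 2 carries out explicitly via the coordinatewise alternative (B1)/(B2) and the corrected endpoints $\widetilde y,\widetilde z$; note also that your $G(x)$ is a union of balls of \emph{varying} radii $\delta(x_0)$, hence not literally of the form $K+B(\vec 0,r)$, and being sandwiched between two monovex sets does not by itself imply monovexity. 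The fatal gap, however, is the nonexistence of the continuous gauge.
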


\begin{proof}

\noindent\textbf{Step 1:} Definitions.

Define a function $\eta : U \to \{\tfrac{1}{2^k} \colon k \in \dN\}$ by
\[ \eta(x) := \max\left\{ \tfrac{1}{2^k} \colon k \in \dN, \tfrac{1}{2^k} \leq \tfrac{\ep(x)}{10}\right\}. \]
This function is upper-semi-continuous function:
for every sequence $(x_k)_{k \in \dN} \subset U$ that converges to a limit $x \in U$ we have
$\limsup_{k \to \infty} \eta(x_k) \leq \eta(x)$.
Given $\delta > 0$,
let $\calG_\delta := P(\delta\dZ^m)$ be the collection of elementary $m$-dimensional boxes in the lattice $\delta\dZ^m$.
Let $F_1(x)$ be the union of all boxes in $\calG_{\eta(x)}$ that have nonempty intersection with $F(x)$:
\[ F_1(x) := \bigcup\{ R \in \calG_{\eta(x)} \colon R \cap F(x) \neq \emptyset\}. \]
The set $F_1(x)$ contains $F(x)$,
it is a uniion of closed boxes, hence closed, and it approximates $F(x)$:
$d_\infty(F_1(x),F(x)) \leq \eta(x) \leq \tfrac{\ep(x)}{10}$ for every $x \in U$.

\noindent\textbf{Step 2:} The set $F_1(x)$ is monovex for every $x \in U$.

Let $x \in U$ and let $y,z \in F_1(x)$.
By the definition of $F_1$, there are $y',z' \in F(x)$ and two boxes $R,S \in \calG_{\eta(x)}$ such that
$y,y' \in R$ and $z,z' \in S$.
Since $F(x)$ is monovex, there is a continuous monotone path $\gamma'$ that connects $y'$ to $z'$ within $F(x)$.
Assume w.l.o.g.~that $y'_i \leq z'_i$ for every $i=1,2,\ldots,n$.

We now define a path $\gamma$.

\begin{enumerate}
\item[(B1)]
If there is $a_i \in \dZ$ such that $a_i \eta(x) \leq z_i, y_i \leq (a_i+1)\eta(x)$, set $\gamma_i(t) := (1-t)y_i + tz_i$.
\item[(B2)]
Otherwise there is $a_i \in \dZ$ such that $y_i \leq a_i\eta(x) \leq z_i$.
We let $\gamma_i(t)$ be the projection of $\gamma'_i(t)$ to the line segment $[y_i,z_i]$:
\[ \gamma_i(t) := \min\{ \max\{ \gamma'_i(t),y_i\}, z_i\}. \]
\end{enumerate}

The reader can verify that $\gamma$ is contained in $F_1(x)$.
However, $\gamma(0)$ need not be $y$ and $\gamma(1)$ need not be $z$.
Indeed, for every $i$ for which Condition (B2) holds we have $\gamma_i(0) =\max\{y_i,y'_i\}$
and $\gamma_i(1) =\min\{z_i,z'_i\}$.
Define then two points $\widetilde y,\widetilde z \in F_1(x)$ by
\begin{eqnarray}
\widetilde y_i &:=& \left\{
\begin{array}{lll}
y_i & \ \ \ \ \ & \hbox{Condition (B1) holds,}\\
\max\{y_i,y'_i\} & & \hbox{Condition (B2) holds.}
\end{array}
\right.\\
\widetilde z_i &:=& \left\{
\begin{array}{lll}
z_i & \ \ \ \ \ & \hbox{Condition (B1) holds,}\\
\min\{z_i,z'_i\} & & \hbox{Condition (B2) holds.}
\end{array}
\right.
\end{eqnarray}
A monotone path in $F_1(x)$ that connects $y$ and $z$
is the concatenation of (a) a monotone path that connects $y$ and $\widetilde y$,
(b) the path $\gamma$, and
(c) a monotone path that connects $\widetilde z$ to $z$.

\noindent\textbf{Step 3:}
For every $x \in U$ there is $\delta_x \in (0,\tfrac{\ep(x)}{10})$ such that
$F_1(y) \subseteq F_1(x)$ and $\eta(y) \leq \eta(x)$ for every $y \in B(x,\delta_x)$.

Since the function $\eta$ is upper-semi-continuous and its image is discrete, for every $x\in U$ there is $\delta_x > 0$ such that $\eta(y) \leq \eta(x)$,
for every $y \in B(x,\delta_x)$.
We turn to prove the analogous property for $F_1$.
If the property does not hold, then for every $k \in \dN$ there exists $y_k \in B(x,\tfrac{1}{k})$
such that $F_1(y_k) \not\subseteq F_1(x)$.
That is, there is $z_k \in F_1(y_k) \setminus F_1(x)$.
Since $z_k \in F_1(y_k)$, the point $z_k$ belongs to some box $R_k$ of the lattice $\calG_{\eta(y_k)}$,
and in particular there is a point $w_k \in R_k \cap F(y_k)$.
Since (i) $F$ has compact values, (ii) the image of $\eta$ is discrete,
and (iii) $\eta$ is locally bounded from below,
it follows that the number of boxes $R_k$ that satisfy these properties is finite, hence by taking a subsequence we can assume that
(a) $R_k = R$ for every $k \in \dN$ and
(b) the sequence $(w_k)_{k \in \dN}$ converges to some point $w \in \dR^n$.
In particular, $w \in R$.
Since the graph of $F$ is relatively closed, $w \in F(x) \cap R$.
In particular, $F(x) \cap R \neq \emptyset$, and hence $R \subseteq F_1(x)$, which implies that $z_k \in F_1(x)$ for every $k \in \dN$, a contradiction.

\noindent\textbf{Step 4:} Definition of the set-valued function $G$.

For every $x \in U$ define a set $Q(x)$ by
\[ Q(x) := \{ y \in U \colon x \in B(y,\tfrac{\delta_y}{2})\}. \]
Thus, $y \in Q(x)$ if the two points $x$ and $y$ are close, when the distance is measured by $\delta_y$.
Note that $x \in Q(x)$ for every $x \in U$, and therefore $Q$ has nonempty values.
Define
\[ G(x) := \bigcup_{ y \in Q(x)} \bigl( F_1(y)  + B(\vec 0,\eta(y)) \bigr), \ \ \ \forall x \in U. \]
We will prove that the set-valued function $G$ satisfies the desired conditions.

Note that $G(x)$ is a union of open sets, and hence it is open.
In addition, since $x \in Q(x)$, we have $G(x) \supseteq F_1(x) \supseteq F(x)$,
hence $\graph(G) \supseteq \graph(F)$.

\noindent\textbf{Step 5:}
If $y_1,y_2 \in Q(x)$ then either
(a) $F_1(y_1) \subseteq F_1(y_2)$ and $\eta(y_1) \leq \eta(y_2)$, or
(b) $F_1(y_1) \supseteq F_1(y_2)$ and $\eta(y_1) \geq \eta(y_2)$.

Let $y_1,y_2 \in Q(x)$
and assume w.l.o.g.~that $\delta_{y_1} \geq \delta_{y_2}$.
Since $x \in B(y_1,\tfrac{\delta_{y_1}}{2}) \cap B(y_2,\tfrac{\delta_{y_2}}{2})$
we deduce that $B(y_1,\tfrac{\delta_{y_1}}{2}) \cap B(y_2,\tfrac{\delta_{y_2}}{2}) \neq \emptyset$.
In particular
\[ d_\infty(y_1,y_2) < \tfrac{\delta_{y_1}}{2} +\tfrac{\delta_{y_2}}{2} \leq \delta_{y_1}, \]
which implies that $y_2 \in B(y_1, \delta_{y_1})$.
By Step 3 this implies that $F_1(y_1) \supseteq F_1(y_2)$ and $\eta(y_1) \geq \eta(y_2)$.

\noindent\textbf{Step 6:}
The set $G(x)$ is monovex for every $x \in U$.

Let $z_1,z_2 \in G(x)$.
Then there are $y_1,y_2 \in Q(x)$ such that
$z_1 \in F_1(y_1) + B(\vec 0,\eta(y_1))$ and $z_2 \in F_1(y_2) + B(\vec 0,\eta(y_2))$.
By Step~5 we can assume w.l.o.g.~that $z_2 \in F_1(y_1) + B(\vec 0,\eta(y_1))$.
By Step~2 and Lemma~\ref{lemma:0} the set $F_1(y_1) + B(\vec 0,\eta(y_1))$ is monovex.

\noindent\textbf{Step 7:}
The graph of $G$ is an open subset of the $\tfrac{3}{10}\ep$-neighborhood of $F$.

By the definition of $G$,
\[ \graph(G) = \left(\bigcup_{y \in U} B(y,\tfrac{\delta_y}{2}) \times (F_1(y) + B(\vec 0,\eta(y)))\right) \cap (U \times \dR^m). \]
It follows that $\graph(G)$ is a union of open sets, hence open.
Moreover, $\graph(G)$ is a subset of the $(\tfrac{\delta_y}{2}+2\eta(y))$-neighborhood of $F$.
The claim follows since $(\tfrac{\delta_y}{2}+2\eta(y))\leq \tfrac{3}{10}\ep$.
\end{proof}

\bigskip

The following result implies Theorem \ref{theorem:closed}.

\begin{proposition}
\label{prop:4}
Every closed monovex set $A \subseteq \dR^n$ is a retract:
there is a continuous function $h : \dR^n \to A$ which is the identity on $A$.
\end{proposition}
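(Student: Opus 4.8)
The plan is to realize the retraction $h$ as a continuous selection of a set-valued ``projection toward $A$''. Working in the max-metric, every closed ball $\overline B(x,r)$ is an axis-parallel box, and I would first record the elementary but crucial fact that the intersection of a monovex set with an axis-parallel box is again monovex: if $u,v$ both lie in such a box, then along any monotone path joining them each coordinate is trapped between $u_i$ and $v_i$, so the path stays in the box; hence a monotone path joining $u,v$ inside the monovex set automatically remains in the intersection. Consequently the (slightly fattened) nearest-point map
\[ F(x) := A \cap \overline B\bigl(x, 2\,d_\infty(x,A)\bigr), \qquad x \in \dR^n, \]
has nonempty, compact, monovex values, a relatively closed graph (the defining conditions $y \in A$ and $d_\infty(x,y) \le 2\,d_\infty(x,A)$ are closed), and satisfies $F(x) = \{x\}$ for every $x \in A$. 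Any continuous selection of $F$ is exactly a retraction $\dR^n \to A$, so it remains to produce one.

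Since $F$ has a closed rather than open graph, Lemma \ref{lemma:michael} does not apply to it directly. Instead I would feed $F$ into Lemma \ref{proposition:1}: for any prescribed continuous $\ep : \dR^n \to (0,1]$ it yields an open-graph, monovex-valued $G$ with $\graph(F) \subseteq \graph(G) \subseteq \calN_\ep(F)$, and then Lemma \ref{lemma:michael} supplies a continuous selection $g$ of $G$. Because $G(x) \subseteq \calN_\ep(F)$ and $F$ takes values in $A$, such a $g$ is an \emph{approximate retraction}: $d_\infty(g(x),A) \le \ep(x)$ for every $x$, while the collapse $F(x)=\{x\}$ on $A$ forces $d_\infty(g(x),x) = O(\ep(x))$ for $x \in A$. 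I would then iterate with a summable sequence $\ep_k \downarrow 0$, producing selections $g_k$, and force convergence by constraining each new selection to a shrinking tube around the previous one, i.e.\ by intersecting the target at step $k+1$ with $\overline B(g_k(x),\rho_k)$. Such an intersection is again $A$ intersected with a box, hence monovex with relatively closed graph, so Lemmas \ref{proposition:1} and \ref{lemma:michael} still apply, and the tube constraint is designed to make $(g_k)$ uniformly Cauchy on every compact set.

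The limit $h := \lim_k g_k$ is then continuous, maps into $A$ (since $d_\infty(g_k(x),A)\to 0$ and $A$ is closed), and restricts to the identity on $A$ (since the successive approximate retractions move points of $A$ by $O(\ep_k)\to 0$), which proves Proposition \ref{prop:4}. The main obstacle is precisely this limiting step, because the three requirements pull against one another: driving $\ep_k \to 0$ is what pushes the image into $A$ and pins down the identity on $A$, yet each application of Lemma \ref{proposition:1} can only \emph{enlarge} the effective target neighborhood, so successive selections are not automatically close. Reconciling these demands is where the slack built into the factor $2$ in the definition of $F$ and the shrinking-tube constraint must be balanced quantitatively, and where one must verify at every step that the tube-constrained maps still have nonempty values and remain within the hypotheses of Lemmas \ref{proposition:1} and \ref{lemma:michael}.
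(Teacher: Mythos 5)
Your setup is sound as far as it goes: the observation that the intersection of a monovex set with an axis-parallel box is monovex is correct (and is exactly why the paper's $F(x)=A\cap\overline B(x,d_\infty(x,A))$ has monovex values), and feeding a closed-graph nearest-point map through Lemma \ref{proposition:1} and then Lemma \ref{lemma:michael} to get an approximate retraction is the right first move. But the proof is not complete, and the obstacle you flag at the end is a genuine gap, not a routine verification. The difficulty is quantitative: with tolerances $\ep_k$ that do not depend on $x$, the errors you must absorb are additive constants, while the only slack you have built in --- the factor $2$ in $\overline B(x,2d_\infty(x,A))$ --- is multiplicative in $d_\infty(x,A)$. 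For $x$ close to, but not in, $A$, one only knows $d_\infty(g_k(x),A)\le C\ep_k$ and $d_\infty(g_k(x),x)\le 2d_\infty(x,A)+C\ep_k$, so once $\ep_k$ and $\rho_{k-1}$ dominate $d_\infty(x,A)$ the constrained target $A\cap\overline B(x,2d_\infty(x,A))\cap\overline B(g_k(x),\rho_k)$ can be empty: the point of $A$ near $g_k(x)$ need not lie within $2d_\infty(x,A)$ of $x$. So the shrinking-tube scheme, as described, cannot be made to work uniformly near $A$, and no convergence argument is actually given.

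The paper resolves precisely this tension by letting the tolerance scale with the distance to $A$: it works on $U=\dR^n\setminus A$ with $F(x)=A\cap\overline B(x,d_\infty(x,A))$ and $\ep(x):=d_\infty(x,A)/10$, so that the \emph{single} selection $g$ produced by Lemmas \ref{proposition:1} and \ref{lemma:michael} satisfies both $d_\infty(g(x),x)\le\tfrac{4}{3}\,d_\infty(x,A)$ and $d_\infty(g(x),A)\le\tfrac{1}{9}\,d_\infty(x,A)$, and extends continuously by the identity on $A$. One then iterates this one map by composition: $d_\infty(g^k(x),A)\le 9^{-k}d_\infty(x,A)$, hence $d_\infty(g^{k+1}(x),g^k(x))\le\tfrac{4}{3}\cdot 9^{-k}d_\infty(x,A)$ is geometrically summable and $g^k$ converges locally uniformly to the desired retract $h$. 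No sequence of distinct selections and no tube constraints are needed. If you want to salvage your scheme, replace the constant-in-$x$ tolerances $\ep_k,\rho_k$ by small multiples of $d_\infty(x,A)$; at that point you will find that a single contraction step iterated by composition already does the job, which is the paper's argument.
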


We now show that Proposition \ref{prop:4} implies Theorem \ref{theorem:closed}.
Indeed, fix $x_0 \in \dR^n$, and let $h$ be the retract of Proposition \ref{prop:4}.
The function $h^* : A \to [0,1] \to A$ defined by
\[ h^*(x,t) := h((1-t)x+tx_0) \]
is a homotopy between $A$ and $\{h(x_0)\}$,
and therefore $A$ is contractible, as claimed.

\bigskip

\begin{proof}[Proof of Proposition \ref{prop:4}]

\noindent\textbf{Step 1:} Definitions.

The real-valued function $x \mapsto d_\infty(x,A)$ is continuous and positive for $x \in \dR^n \setminus A$.
Let $F : \dR^n \setminus A \rightrightarrows A$ be the set-valued function defined by
\[ F(x) := A \cap \overline{B}(x,d_\infty(x,A)). \]
The set $F(x)$ contains all points in $A$ that are closest to $x$.
Since $A$ is a closed monovex set, the set-valued function $F$ has a relatively closed graph and compact monovex values.

Set $\ep(x) := \tfrac{d_\infty(x,A)}{10}$ and apply Lemma \ref{proposition:1} to $F$ and $\ep$.
It follows that there exists a set-valued function $G : \dR^n \setminus A \to \dR^n$ with open graph and monovex values such that
$\graph(G)$ lies in the $\ep$-neighborhood of $F$.

\noindent\textbf{Step 2:}
For every $x \in \dR^n \setminus A$ we have $G(x) \subseteq B(x,\tfrac{4}{3}d_\infty(x,A))$.

Let $(x,z) \in \graph(G)$.
Since $\graph(G)$ is contained in an $\ep$-neighborhood of $F$,
there is $(x',z') \in \graph(F)$ such that $d_\infty(x',x) < \ep(x')$
and $d_\infty(z', z) < \ep(x')$.
Since $z' \in F(x')= A \cap \overline B(x',d_\infty(x',A))$,
it follows that $d_\infty(z',x') = d_\infty(x',A)$.
By the triangle inequality
\begin{eqnarray}
\label{equ:4}
d_\infty(x,A) &\geq& d_\infty(x',A) - d_\infty(x,x') > d_\infty(x',A) - \ep(x')\\
&=& d_\infty(x',A) - \tfrac{d_\infty(x',A)}{10} = \tfrac{9}{10}d_\infty(x',A).
\nonumber
\end{eqnarray}
By the triangle inequality once again and (\ref{equ:4}) we obtain that
\begin{eqnarray}
d_\infty(z,x) &\leq& d_\infty(z,z') + d_\infty(z',x') + d_\infty(x',x)\\
&<& 2\ep(x') + d_\infty(x',A) = \tfrac{12}{10}d_\infty(x',A) \leq \tfrac{12}{9}d_\infty(x,A),
\end{eqnarray}
as claimed.

\noindent\textbf{Step 3:} Definition of a function $g$.

By Michael's selection theorem (Lemma \ref{lemma:michael}), there is a continuous selection $g$ of $G$.
Step 2 implies that
\begin{equation}
\label{equ:10}
d_\infty(g(x),x) \leq \tfrac{4}{3} d_\infty(x,A), \ \ \ \forall x \in \dR^n \setminus A.
\end{equation}
As a consequence we obtain that for every sequence $(x_k)_{k \in \dN}$ that converges to a limit $x$ that lies in $A$ we have
$\lim_{k \to \infty} d_\infty(g(x_k),x_k) = 0$.
In particular, the function $g$ can be extended to a continuous function from $\dR^n$ to $\dR^n$ that is the identity on $A$.

Since $(x,g(x)) \in \graph(G)$
and since $G$ lies in an $\ep$-neighborhood of $F$,
it follows that there is $(x',y') \in \graph(F)$ such that $d_\infty(x,x') < \ep(x')$ and $d_\infty(g(x),y') < \ep(x')$.
Since $(x',y') \in \graph(F)$ we in particular deduce that $y' \in A$, so that
\[ d_\infty(g(x),A) < \ep(x') = \tfrac{d_\infty(x',A)}{10} \leq \tfrac{d_\infty(x,A)}{9}, \]
where the last inequality follows from Eq.~(\ref{equ:4}).

\noindent\textbf{Step 4:} Definition of the function $h$.

For every $k \in \dN$ let $g^k$ be the composition of $g$ on itself $k$ times;
that is, $g^1 := g$ and $g^k := g\circ g^{k-1}$.
For every $k \in \dN$ the function $g^k$ is the identity on $A$ and satisfies
$d(g^k(x),A) \leq \tfrac{d_{\infty}(x,A)}{9^k}$ for every $x \in \dR^n \setminus A$.
Together with (\ref{equ:10}) we deduce that the functions $(g^k)_{k \in \dN}$ converge locally uniformly to some continuous function $h$,
which is the desirable retract.
\end{proof}

\end{document}